\documentclass[reqno,oneside,11pt]{amsart}
\usepackage[T1]{fontenc}
\usepackage[utf8]{inputenc}
\usepackage{lmodern}
\usepackage[french,english]{babel}
\usepackage{geometry} 

\usepackage{amsmath,amssymb,amsfonts,amsthm,mathtools}
\usepackage{mathrsfs}
\usepackage{graphicx}
\usepackage{caption}
\usepackage{subcaption} 
\usepackage{booktabs}
\usepackage{array}
\usepackage{paralist}
\usepackage{fancyhdr}
\usepackage{emptypage}
\fancypagestyle{plain}{
	\fancyhf{}
	\fancyfoot[RO,RE]{\thepage}
	
	}

\usepackage{comment} 
\usepackage{diagbox}
\usepackage{xcolor}
\usepackage{epigraph}

\usepackage[noadjust]{cite}
\usepackage[
pagebackref=true,
colorlinks=true,
urlcolor=purple,
linkcolor=purple!87!black,
citecolor=green!60!black,
pdfborder={0 0 0}
]{hyperref}
\renewcommand*{\backref}[1]{}
\renewcommand*{\backrefalt}[4]{[{\tiny%
		\ifcase #1 Not cited.%
		\or Cited on page~#2.%
		\else Cited on pages #2.%
		\fi%
	}]}
\usepackage{cleveref}

\usepackage{bookmark}
\usepackage{dsfont}

\newcommand{\Z}{\mathbb{Z}}

\renewcommand{\P}{\mathbb{P}}

\newcommand{\thistheoremname}{}

\newtheorem*{genericthm*}{\thistheoremname}
\newenvironment{namedthm*}[1]
{\renewcommand{\thistheoremname}{#1}%
	\begin{genericthm*}}
	{\end{genericthm*}}
\theoremstyle{plain}
\newtheorem{thm}{Theorem}[section] 

\newtheorem{prop}[thm]{Proposition}
\newtheorem{lem}[thm]{Lemma}
\newtheorem{cor}[thm]{Corollary}
\newtheorem{question}[thm]{Question}
\theoremstyle{definition}

\newtheorem{claim}[thm]{Claim}
\newtheorem{defn}[thm]{Definition} 

\usepackage{bm,thmtools}

\newcommand{\cost}{\mathrm{cost}}

\newcommand{\llangle}{\langle\!\langle}
\newcommand{\rrangle}{\rangle\!\rangle}

\allowdisplaybreaks
\author{Miguel Donoso-Echenique}
\email[Miguel Donoso-Echenique]{mdonosoe@uni-muenster.de, migueldonosoe@gmail.com}
\address{University of Münster, Einsteinstrasse 62, Münster 48149, Germany}

\urladdr{\url{https://sites.google.com/view/miguel-donoso-echenique}}

\author{Eduardo Silva} 
\email[Eduardo Silva]{eduardo.silva@uni-muenster.de, edosilvamuller@gmail.com}
\address{University of Münster, Einsteinstrasse 62, Münster 48149, Germany}

\urladdr{\url{https://edoasd.github.io/eduardo_silva_math/}}
\keywords{cost, Free Burnside groups, orbit equivalence, $\ell^2$-Betti numbers}
\title[]{Free Burnside groups of large odd exponent have cost 1}
\allowdisplaybreaks
\usepackage{url}

\usepackage{enumitem}
\setlist[enumerate, 1]{label = (\roman*), ref = \roman*}
\setlist[enumerate, 2]{label = \theenumi.\alph*}

\begin{document}	
	\begin{abstract}
We prove that the free Burnside group $B(m,n)$ with $m\geq 2$ generators and sufficiently large odd exponent $n$ (e.g., $n\geq 1003$ if $m=2$, and $n\geq 665$ if $m\geq 3$), has cost $1$. It follows that $B(m,n)$ is anti-treeable, and that its first $\ell^2$-Betti number $\smash{\beta_1^{(2)}(B(m,n))}$ vanishes. The latter recovers and extends a result of Feldkamp and Kionke, who proved that $\smash{\beta_1^{(2)}(B(m,p))=0}$ for all sufficiently large \emph{prime} exponents $p$ \cite{feldkampkionke2023}.
	\end{abstract}

	\maketitle

\section{Introduction}
The \emph{free Burnside group} of rank $m\ge 2$ and exponent $n\ge 2$ is  the quotient
	\[B(m,n)\coloneqq F_m/\llangle g^n : g\in F_m\rrangle,
	\]
	where $F_m$ denotes the free group of rank $m$ and $\llangle g^n : g\in F_m\rrangle$ is the normal closure in $F_m$ of all $n$-powers. In 1902, Burnside asked whether these groups are necessarily finite \cite{Burnside1902}. This question remained open for more than sixty years. In 1968, Novikov and Adian proved that $B(m,n)$ is infinite for every $m\geq 2$ and every sufficiently large odd exponent $n$ \cite{NovikovAdjan1968}. Their original bound was later improved by Adian to odd $n\geq 665$ \cite{Adian1979}. More recently, Atkarskaya, Rips and Tent lowered the bound to $n\ge 557$, which is currently the best general bound for odd exponent \cite[Theorem 2.1]{AtkarskayaRipsTent2024}. The case of even exponent was settled for sufficiently large exponents by Ivanov and Lysenok \cite{Ivanov1994,Lysenok1996}. Nevertheless, the picture for small exponents is still far from complete. For instance, it is still unknown whether
	$B(2,5)$ is finite.
	
	Since the solution of the Burnside problem in the 60's, free Burnside groups have been extensively studied from the combinatorial and geometric points of view. Important contributions include the theory of graded diagrams and the study of the subgroup structure of $B(m,n)$ \cite{Adian1979,Olshanskii1991,Ivanov1994,Ivanov2003}, as well as geometric approaches to periodic quotients of hyperbolic groups and small cancellation theory over Burnside groups \cite{Coulon2014,CoulonGruber2019,CoulonSteenbock2022}. Free Burnside groups also provide examples of non-amenable groups without non-abelian free subgroups. More precisely, Adian proved that $B(m,n)$ is non-amenable for every $m\geq 2$ and every odd $n\geq 665$ \cite{Adyan1982}. Osin later showed that free Burnside groups of sufficiently large odd exponent satisfy the stronger property of \emph{uniform non-amenability} \cite{Osin2007}.
	
By comparison, much less is known about free Burnside groups from the perspective of measured group theory. Among the few results in this direction, Osajda proved that if $B(m,n)$ is infinite, then $B(m,kn)$ does not have Kazhdan's property (T) for any integer $k\geq 2$ \cite[Theorem 1]{Osajda2018}. Combined with the known infinitude results for free Burnside groups, this shows in particular that free Burnside groups of sufficiently large composite exponent do not have property (T). In a different direction, Feldkamp and Kionke proved that, for all sufficiently large prime exponents $p$, the first $\ell^2$-Betti number of $B(m,p)$ vanishes \cite[Corollary 0.2]{feldkampkionke2023}.
	
The open question most closely related to the present paper concerns the theory of \emph{orbit equivalence}. Recall that two probability-measure preserving (p.m.p.) actions $G\curvearrowright (X,\mu)$ and $H\curvearrowright (Y,\nu)$ are \emph{orbit equivalent} if there exist conull invariant subsets $X_0\subseteq X$ and $Y_0\subseteq Y$ and a measurable isomorphism $\theta:X_0\to Y_0$ such that $\theta(Gx)=H\theta(x)$ for every $x\in X_0$. Two countable groups are called orbit equivalent if they admit essentially free, ergodic, p.m.p.\ actions which are orbit equivalent. As far as we know, the following basic question is open:

\begin{question}\label{question: orbit equivalent}
	 Can two non-isomorphic infinite free Burnside groups $B(m,n)$ and $B(k,\ell)$ be orbit equivalent?
\end{question}

One way to approach this question is through the theory of \emph{cost}, one of the main numerical invariants in orbit equivalence theory. Introduced by Levitt \cite{Levitt1995} and further developed by Gaboriau \cite{gaboriau2000cout}, the cost of a free p.m.p.\ action is an invariant of its orbit-equivalence relation and takes values in $[1,\infty]$ for infinite groups. For this paper, it will be convenient to use the probabilistic characterization of the cost of a group $G$ as one half of the
infimum expected degree at the identity among all $G$-invariant
random connected spanning graphs of $G$; see \cite[Proposition 29.5]{KechrisMiller2004} and Definition \ref{defn: cost} below. 

Gaboriau's computation of cost for free groups shows that every free p.m.p.\ action of the free group $F_m$ has cost $m$ \cite[Corollaire 1]{gaboriau2000cout}. In particular, free groups have \emph{fixed price}, meaning that all their free p.m.p.\ actions have the same cost, and cost distinguishes free groups of different ranks up to orbit equivalence. This is very different from the situation for amenable groups: by the Ornstein--Weiss theorem, any two free, ergodic, p.m.p.\ actions of any two countably infinite amenable groups are orbit equivalent \cite{OrnsteinWeiss1980}. In particular, every infinite amenable group has fixed price $1$.
\subsection{The main result}
Motivated by Question \ref{question: orbit equivalent}, it is therefore natural to ask whether the group cost can 
distinguish free Burnside groups of different ranks or exponents. Our main result shows that, for sufficiently large odd exponent, it cannot: all free Burnside groups covered by our theorem have cost $1$, independently of their rank or exponent.
\begin{thm}\label{thm: free burnside main theorem}
		Let $m\geq 2$ and let $n$ be a sufficiently large odd positive integer. Denote by $B(m,n)$ the free Burnside group of rank $m$ and exponent $n$.
		Then $\cost(B(m,n))=1$.
	\end{thm}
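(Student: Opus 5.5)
We plan to prove that $B(m,n)$ has cost $1$ using the probabilistic description of cost: for every $\varepsilon>0$ we will construct a $B(m,n)$-invariant random connected spanning subgraph of a Cayley graph of $B(m,n)$ whose expected degree at the identity is at most $2+\varepsilon$. The idea is to use the abundance of torsion. Every generator $a$ of the standard generating set $S=\{a_1,\dots,a_m\}$ has order exactly $n$. The cosets of $\langle a_i\rangle$ therefore partition the group into cycles of length $n$, and the full Cayley graph is the union of these $m$ families of $n$-cycles.

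\textbf{Step 1 (one generator costs almost nothing).} The cycles $\langle a_1\rangle$-cosets together span a subgraph with one component per coset. By Bernoulli percolation or by choosing a random invariant "cut" in each cycle, we delete one edge from each $a_1$-cycle and keep the resulting $a_1$-paths. This part has expected degree $2(n-1)/n$. The remaining task is to connect these finite components using edges labelled by $a_2,\dots,a_m$ at vanishing density.

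\textbf{Step 2 (connect using sparse edges).} We pick a random invariant subset of cosets $g\langle a_1\rangle$, or equivalently mark each coset independently with small probability $\delta$. We then want to keep only a few edges of the other labels, connecting each $a_1$-coset to the rest with density $O(\delta)$ per vertex. For this we use a standard cost-reduction principle (Gaboriau): if $H\le G$ is an infinite subgroup such that the relation generated by $H$ together with a small-measure set of edges generates $G$, then $\cost(G)\le \cost(H)+\varepsilon$. The natural candidates for $H$ are infinite subgroups with cost $1$ that \emph{normalize or commensurate} enough of the group. Concretely, we would use the known structure of $B(m,n)$ for large odd $n$ (Adian, Ol'shanskii, Ivanov): it contains infinite subgroups, for instance free Burnside subgroups of infinite rank and infinite amenable (locally finite?) pieces, but crucially every element $g$ has the property that $\langle a_1, g a_1 g^{-1}\rangle$ is again infinite. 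The cost-$1$ mechanism we will aim for is the Gaboriau criterion: if $G$ is generated by $g_1,\dots,g_k$ with each $\langle g_i\rangle$ infinite and $[g_i,g_{i+1}]=1$, then cost is $1$. Torsion prevents this directly, so instead we would use a "finite-subgroup chain" variant. If $G$ is generated by finite subgroups $C_1,\dots,C_k$ of order $n$ such that the consecutive subgroups $\langle C_i,C_{i+1}\rangle$ are infinite with cost $1$, we would like to conclude that $G$ has cost $1$. The engine for this is the induction-on-cyclic-pieces argument for random graphs: a connected invariant graph on $\langle C_1\rangle$-cosets can be spread using rare edges, since each coset is finite but the random marks have density $1/n$.

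\textbf{Step 3 (main obstacle).} The hardest step is to show that the infinite components produced by gluing $a_1$-cycles and $a_2$-cycles with sparse edges are connected. This amounts to showing that $B(2,n)=\langle a_1,a_2\rangle$ itself has cost $1$. That is the base case and the genuinely hard statement; the extension to $m\ge 3$ then follows by adding each $a_i$ with $O(1/n)$ extra edges per vertex, using a sparse random edge set. For the base case we would use the $n$-power relations quantitatively. The relation $(a_1a_2)^n=1$ shows that the $a_1a_2$-cycles have length $n$, and combining several such torsion relations yields short closed loops everywhere. The plan is to run a random-graph argument: keep an $\varepsilon$-fraction of the $a_2$-edges at random, together with all but one edge of each $a_1$- and $(a_1a_2)$-cycle. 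We then argue, using the non-amenability of $B(2,n)$ and the uniqueness/infinite-cluster theory for percolation on nonamenable graphs with a Burton–Keane style argument, that the resulting cluster structure is connected. This requires an ergodic-theoretic argument we have not checked, and this is where we expect the proof to need real input, plausibly from the graded-diagram description of $B(m,n)$ for large odd $n$.
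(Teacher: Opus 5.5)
There is a genuine gap: the proposal never supplies a mechanism that actually produces cost $1$. Step~3 reduces everything to $\cost(B(2,n))=1$. That is the whole theorem for $m=2$, and your extension to $m\ge 3$ presupposes it. The argument sketched there cannot work as stated:
\begin{itemize}
\item Your graph keeps all but one edge of every $a_1$-cycle and of every $(a_1a_2)$-cycle. Its expected degree at the identity is therefore $4(n-1)/n+2\varepsilon$, which gives a cost bound of roughly $2$. This is no better than the trivial bound $2(1-1/n)$ coming from two generators of order $n$.
\item Burton--Keane uniqueness is an amenability argument. Non-amenability is precisely what allows infinitely many infinite clusters, so it cannot be invoked to force connectivity.
\item The ``finite-subgroup chain'' criterion in Step~2 is circular for $k=2$, since there $\langle C_1,C_2\rangle=G$.
\item The ``cost-reduction principle'' gives nothing until you explain why sparse $a_i$-edges connect $g$ to $ga_i$ for \emph{every} $g$.
\end{itemize}

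The missing idea is that you never need to build near-optimal graphs on $G$ by hand. By Gaboriau's theorem, an infinite normal subgroup $N\lhd G$ of infinite index with merely \emph{finite} cost already forces $\cost(G)=1$. The paper constructs such an $N$ as follows.
\begin{itemize}
\item For $m\ge 3$, take $N=\llangle x_1\rrangle$, with quotient $B(m-1,n)$.
\item For $m=2$ (this requires $n\ge 1003$), take a $Q$-subgroup $H\cong B(\infty,n)$ (Ivanov, Atabekyan) and let $N$ be the normal closure of the kernel of the shift endomorphism of $H$. Then $B(\infty,n)$ embeds in $G/N$, so the index is infinite.
\end{itemize}
In both cases $N$ contains a finite-index subgroup of $\langle x_1,x_2\rangle\cong B(2,n)$, hence an infinite finitely generated subgroup $N_0$.

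Finiteness of $\cost(N)$ is then exactly your ``add sparse edges'' idea, made rigorous. Exhaust $N$ by $N_j=\langle N_{j-1},a_j\rangle$, and keep each edge $\{x,xa_j\}$ independently with probability $2^{-j}$. Because centralizers are cyclic of order $n$, there are infinitely many $b_i\in N_{j-1}$ with the sets $C_G(a_jb_i)\setminus\{e_G\}$ pairwise disjoint. The relation $(a_jb_i)^{d_i}=e_G$ with $d_i\le n$ gives a path from $ga_j$ back to $g$. This path alternates between paths inside $N_{j-1}$-cosets (connected by induction) and $d_i-1$ random edges at the pairwise disjoint positions $g(a_jb_i)^\ell$. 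The resulting events are independent and each has probability at least $2^{-j(n-1)}$. Borel--Cantelli therefore yields connectivity almost surely, while the expected degree stays at most $2|S|+2$.
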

	
	The condition that $n$ be sufficiently large comes from Lemma \ref{lem:short_exact_sequence} below. With the currently available
	results on free Burnside groups, one may take odd $n\geq 1003$ if $m=2$ and odd $n\geq 665$ if $m\geq 3$. 

	We emphasise that Theorem~\ref{thm: free burnside main theorem} concerns the \emph{infimum} of the costs of the free p.m.p.\ actions of $B(m,n)$; it does not assert that \emph{every} such action has cost $1$. Thus, it remains open whether free Burnside groups of sufficiently large odd exponent have \emph{fixed price $1$}. A recent preprint of Slutsky gives a sufficient criterion for the latter property. Namely, if $G$ is an infinite finitely generated group with finite generating set $S$ and $\inf\left\{ |FSF^{-1}|/|F|^2 : F\subseteq G \text{ finite} \right\}=0$, then $G$ has fixed price $1$ \cite{Slutsky2026}. This criterion recovers the main previously known classes of groups with fixed price $1$, including groups that have an infinite normal amenable subgroup \cite[VI.26.\ (a)]{gaboriau2000cout}, lattices in higher-rank real semisimple Lie groups \cite[Theorem C]{FraczykMellickWilkens} and products of infinite groups \cite{Khezeli2026,Bevilacqua2025}, and also yields new examples.
    
    It is not clear whether Slutsky's criterion can be verified for free Burnside groups. In fact, results of Coulon and Steenbock on product-set growth suggest that this may be challenging. They show  in \cite[Theorem 1.2]{CoulonSteenbock2022} that there exists a constant $c>0$ such that, for free Burnside groups $B(m,n)$ of sufficiently large odd exponent, every finite subset $F\subseteq B(m,n)$ not contained in a finite cyclic subgroup satisfies $|F^r|\geq (c|F|)^{\lfloor (r+1)/2\rfloor}$ for every $r\geq 1$. Thus, finite subsets of $B(m,n)$ necessarily exhibit substantial product-set growth, which appears to be in tension with the small-product-set phenomenon underlying Slutsky's criterion. Whether these growth estimates can be used to rule out the criterion, or whether free Burnside groups nevertheless have fixed price $1$, remains open.

It is also instructive to compare free Burnside groups with groups having Kazhdan's property (T). Hutchcroft and Pete proved that every infinite countable group with property (T) has cost $1$ \cite{HutchcroftPete2020}. On the other hand, infinite property (T) groups display a strong orbit-equivalence rigidity. In particular, Popa's orbit-equivalence superrigidity theorem implies that if $G$ has property (T) and no non-trivial finite normal subgroup, then its Bernoulli shift is orbit-equivalence superrigid: whenever a Bernoulli action of $G$ is orbit equivalent to a free ergodic p.m.p.\ action of a countable group $H$, the groups $G$ and $H$ are isomorphic and, after identifying them through this isomorphism, the two actions are conjugate \cite{Popa2007}.

No analogous orbit-equivalence superrigidity result is currently known for free Burnside groups. Thus, even in view of Theorem \ref{thm: free burnside main theorem}, it remains open whether two non-isomorphic infinite free Burnside groups can admit orbit-equivalent free p.m.p.\ actions. To the best of our knowledge, one cannot even exclude the possibility that the Bernoulli shifts of two non-isomorphic free Burnside groups are orbit equivalent. This provides a more specific, and a priori weaker, form of Question \ref{question: orbit equivalent}. 
By contrast, continuous orbit equivalence of Bernoulli shifts does distinguish non-isomorphic infinite free Burnside groups. Namely, if two Bernoulli shifts over $B(m,n)$ and $B(k,\ell)$ are continuously orbit equivalent and both groups are infinite, then $m=k$ and $n=\ell$. Indeed, extending earlier work of Chung and Jiang \cite{ChungJiang2017}, Cohen proved that full shifts over one-ended groups are continuously cocycle superrigid \cite{Cohen2020}: every continuous cocycle with values in a countable group is cohomologous to a homomorphism. Since every infinite finitely generated torsion group is one-ended, this applies in particular to infinite free Burnside groups. Using moreover that such groups have no non-trivial finite normal subgroups, one concludes by standard arguments that the two groups are isomorphic. Comparing their exponents and abelianizations then yields $n=\ell$ and $m=k$.

\subsection{Two consequences of Theorem \ref{thm: free burnside main theorem}}

The theory of $\ell^2$-Betti numbers of groups, originating in the work of Atiyah \cite{Atiyah1976} and further developed by Cheeger and Gromov \cite{CheegerGromov1986}, was extended to measured equivalence relations by Gaboriau \cite{Gaboriau2002}. One of the consequences of his work is the inequality $\smash{\cost(G)\geq 1+\beta_1^{(2)}(G)}$ for every infinite countable group $G$ \cite[Corollaire 3.23]{Gaboriau2002}, where $\smash{\beta_1^{(2)}(G)}$ denotes the first $\ell^2$-Betti number of $G$. It remains an open problem whether this inequality can be strict for some countable group. Combining Gaboriau's inequality with Theorem \ref{thm: free burnside main theorem}, we obtain the following corollary.

\begin{cor}\label{cor: betti number}
	Let $m\ge 2$ and let $n$ be a sufficiently large odd positive integer. Then the free Burnside group $B(m,n)$ with $m$ generators of exponent $n$ has vanishing first $\ell^2$-Betti number.  
\end{cor}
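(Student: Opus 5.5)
The corollary follows directly from Theorem~\ref{thm: free burnside main theorem} together with Gaboriau's inequality $\cost(G)\geq 1+\beta_1^{(2)}(G)$, valid for every infinite countable group $G$ \cite[Corollaire 3.23]{Gaboriau2002}. The plan is to check that $B(m,n)$ satisfies the hypotheses of that inequality, combine it with the main theorem, and use that $\ell^2$-Betti numbers are non-negative.

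First, for $m\geq 2$ and $n$ odd and sufficiently large, $B(m,n)$ is infinite. This is the Novikov--Adian theorem \cite{NovikovAdjan1968,Adian1979}. The range of $n$ for which Theorem~\ref{thm: free burnside main theorem} holds, namely odd $n\geq 665$ (or $n\geq 1003$ when $m=2$), lies inside the range where infinitude is known. The group is also countable, being finitely generated. Gaboriau's inequality therefore applies and gives
\[
1+\beta_1^{(2)}(B(m,n))\;\leq\;\cost(B(m,n)).
\]
Here the right-hand side is the group cost in the sense of Definition~\ref{defn: cost}, that is, the infimum of the costs of free p.m.p.\ actions. Gaboriau's inequality holds for every free p.m.p.\ action of $G$, so it also holds for this infimum.

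Next, Theorem~\ref{thm: free burnside main theorem} gives $\cost(B(m,n))=1$. Substituting this yields $\beta_1^{(2)}(B(m,n))\leq 0$. Since $\ell^2$-Betti numbers are von Neumann dimensions of Hilbert modules, they are non-negative, and so $\beta_1^{(2)}(B(m,n))=0$.

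There is no substantial obstacle, since all the work is in Theorem~\ref{thm: free burnside main theorem}. The one point to verify is that the cost notion used in the theorem, defined via invariant random connected spanning graphs, agrees with Gaboriau's orbit-equivalence cost to which his inequality refers. This identification is the content of \cite[Proposition 29.5]{KechrisMiller2004}, already cited in the paper, so I would simply quote it.
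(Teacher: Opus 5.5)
Your proposal is correct and is essentially the paper's argument: combine Gaboriau's inequality $\cost(G)\geq 1+\beta_1^{(2)}(G)$ with $\cost(B(m,n))=1$ from Theorem~\ref{thm: free burnside main theorem}. Your additional checks are all sound: infinitude of $B(m,n)$, passing the inequality to the infimum over free actions, non-negativity of $\beta_1^{(2)}$, and the identification of the two definitions of cost via \cite[Proposition 29.5]{KechrisMiller2004}.
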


In particular, this recovers the result of Feldkamp and Kionke \cite{feldkampkionke2023}, who proved the above for every $m\geq 2$ and all sufficiently large prime exponents $p$. Osajda's result \cite{Osajda2018} leaves open the possibility that free Burnside groups of prime exponent may have property (T), in which case the vanishing result of Feldkamp and Kionke would follow from it. Corollary \ref{cor: betti number} extends the vanishing of the first $\ell^2$-Betti number to all sufficiently large odd exponents, including the ones for which Osajda's result implies that the corresponding free Burnside groups do not have property (T).

Another consequence of Theorem \ref{thm: free burnside main theorem} concerns the Borel graph structures that can generate the orbit equivalence relations arising from free p.m.p.\ actions of free Burnside groups. Recall that a group is called \emph{anti-treeable} if no free p.m.p.\ action of the group has an orbit equivalence relation generated by a Borel graphing whose connected components are trees; see \cite[page 44]{gaboriau2000cout}. Equivalently, a group $G$ is anti-treeable if the space of trees with vertex set $G$ admits no $G$-invariant probability measure; see \cite{PemantlePeres2000} and the comment at the end of the first paragraph of Section 0.3 in \cite{Gaboriau2005}.

Gaboriau proved that every non-amenable group of cost $1$ is anti-treeable \cite[Théorème 4]{gaboriau2000cout}. The combination of this result with Theorem \ref{thm: free burnside main theorem} and the non-amenability of free Burnside groups of sufficiently large odd exponent yields the following.

\begin{cor}\label{cor: antitreeable}
Free Burnside groups $B(m,n)$ of sufficiently large odd exponent $n$ are anti-treeable.
\end{cor}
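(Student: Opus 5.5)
The corollary follows by combining three ingredients. The first is Theorem~\ref{thm: free burnside main theorem}, which gives $\cost(B(m,n))=1$. The second is Adian's theorem that $B(m,n)$ is non-amenable for every $m\geq 2$ and every odd $n\geq 665$ \cite{Adyan1982}. The third is Gaboriau's theorem that every non-amenable countable group of cost $1$ is anti-treeable \cite[Théorème 4]{gaboriau2000cout}.

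We first fix the range of exponents. Take $n$ odd and large enough that Theorem~\ref{thm: free burnside main theorem} applies: odd $n\geq 1003$ if $m=2$, and odd $n\geq 665$ if $m\geq 3$. In every case this forces $n\geq 665$. Adian's non-amenability theorem therefore applies to $B(m,n)$ for all $m\geq 2$. Along the way we also record that $B(m,n)$ is infinite, as required in the definition of cost. This follows either from the Novikov--Adian theorem or directly from non-amenability, since finite groups are amenable.

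Next we check the setting of Gaboriau's result. It is stated for countable groups, and $\cost(G)$ there is the infimum over free p.m.p.\ actions, which matches the definition of group cost used here. We should confirm that this infimum is the same quantity as the probabilistic one in Definition~\ref{defn: cost}; the paper identifies them via \cite[Proposition 29.5]{KechrisMiller2004}.

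Applying Gaboriau's theorem to $G=B(m,n)$ then shows that no free p.m.p.\ action of $B(m,n)$ has an orbit equivalence relation generated by a treeing. This is exactly the statement that $B(m,n)$ is anti-treeable.

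There is no real obstacle; the only care needed is in the bookkeeping. Specifically, the hypotheses ``sufficiently large'' in the main theorem and ``$n\geq 665$'' in Adian's theorem must hold simultaneously, and the notion of cost in Gaboriau's theorem must agree with the one in Theorem~\ref{thm: free burnside main theorem}.
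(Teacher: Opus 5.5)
Your proposal is correct and matches the paper's argument: combine $\cost(B(m,n))=1$ from Theorem~\ref{thm: free burnside main theorem}, Adian's non-amenability of $B(m,n)$ for odd $n\geq 665$, and Gaboriau's theorem that every non-amenable group of cost $1$ is anti-treeable. Your bookkeeping on the exponent ranges and on the equivalence of the two definitions of cost is also fine.
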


\subsection{Concluding remarks}
We finish the introduction by emphasizing that our proof uses features that are specific to free Burnside groups, and that there seems to be no general reason to expect every infinite group of bounded exponent to have cost $1$. A crucial ingredient in the proof of Theorem \ref{thm: free burnside main theorem} is Lemma \ref{lem:short_exact_sequence}, whose application relies, in particular, on the existence of suitable infinite finitely generated proper subgroups of free Burnside groups. This feature is completely absent, for instance, in Tarski monsters. In addition, our proof uses the fact that centralizers of non-trivial elements on free Burnside groups of sufficiently large odd exponent are cyclic of order $n$; see Lemma \ref{lem: cyclic centralizers} below. 

Centralizers also play a role in Tucker-Drob's proof that inner amenable groups have fixed price $1$ \cite[Theorem 5]{tuckerdrob2020}, although in a different manner. His argument uses the fact that for every non-amenable subgroup $L\leq G$, the intersection $L\cap C_G(g)$ is non-amenable for almost every $g\in G$ with respect to an atomless conjugation-invariant mean \cite[Lemma 4.2]{tuckerdrob2020}. Thus, Tucker-Drob's proof relies on large, non-amenable centralizers, whereas ours relies on centralizers being finite of a uniformly bounded size.

If one allows torsion groups whose element orders are not uniformly bounded, then the conclusion that the cost is equal to $1$ is no longer true. Indeed, Lück and Osin proved that, for every prime $p$, there exists a finitely generated residually finite $p$-group $G$ with $\smash{\beta_1^{(2)}(G)>0}$  \cite[Theorem 1.2]{luckosin2011}. By the general inequality $\smash{\cost(G)\geq 1+\beta_1^{(2)}(G)}$, these groups have cost strictly larger than $1$.

\subsection*{Acknowledgments}
 We are grateful to Martín Gilabert Vio for a careful reading of an earlier version of this preprint and to Raz Slutsky for comments and discussion regarding the potential applications of his recent criterion for fixed price 1. We thank Konrad Wróbel for carefully reading an earlier draft and for pointing out the comparison with Tucker-Drob’s use of centralizers. The authors acknowledge support of the Deutsche Forschungsgemeinschaft (DFG, German Research Foundation) under Germany's Excellence Strategy EXC 2044/2 –390685587, Mathematics Münster: Dynamics–Geometry–Structure. ChatGPT 5.6 Sol was used to improve the presentation of the document.

\section{The proof of the main theorem}
\subsection{Notation}

We begin by fixing some notation and conventions concerning graphs. All graphs considered in this paper are simple and undirected. If $V$ is a countable set, we denote by
\[
[V]^2\coloneqq\bigl\{\{x,y\}:x,y\in V,\ x\neq y\bigr\}
\]
the set of unordered pairs of distinct elements of $V$. A graph $\mathcal{G}$ with vertex set $V$ is identified with its edge set $E(\mathcal{G})\subseteq [V]^2$. In particular, an edge joining $x$ and $y$ will always be denoted by
$\{x,y\}$; thus $\{x,y\}=\{y,x\}$.

A finite sequence of vertices $x_0,x_1,\ldots,x_k$ is called a path in $\mathcal{G}$ from $x_0$ to $x_k$ if
\[
\{x_{i-1},x_i\}\in E(\mathcal{G})
\ \ \ \text{for every }1\leq i\leq k.
\]
Two vertices $x,y\in V$ are said to be connected in $\mathcal{G}$ if
there exists a path in $\mathcal{G}$ from $x$ to $y$. The graph $\mathcal{G}$ is connected if every two vertices of $V$ are connected.

For $A\subseteq V$, we denote by $\mathcal{G}[A]$ the subgraph of $\mathcal{G}$ induced by $A$, so that $E(\mathcal{G}[A]) = E(\mathcal{G})\cap [A]^2$. Thus, when we say that $\mathcal{G}$ is connected on $A$, we mean that the induced subgraph $\mathcal{G}[A]$ is connected.

For $x\in V$, the degree of $x$ in $\mathcal{G}$ is
\[
\deg_{\mathcal G}(x)\coloneqq \left|\{y\in V:\{x,y\}\in E(\mathcal G)\}\right|,
\]
where the degree is allowed to be infinite.

We denote by $\mathrm{Graph}(V)$ the space of all simple undirected graphs with
vertex set $V$. Identifying a graph with the indicator function of its
edge set, we identify
\[
\mathrm{Graph}(V)=\{0,1\}^{[V]^2},
\]
and endow this space with the corresponding product Borel $\sigma$-algebra. We denote by $\mathcal{S}(V)\subseteq\mathrm{Graph}(V)$ the Borel subspace consisting of connected graphs.

When $V=G$ is a countable group, $G$ acts on $\mathrm{Graph}(G)$ by left
translation: for $g\in G$ and $\mathcal G\in\mathrm{Graph}(G)$, the graph
$g\mathcal G$ is defined by the set of edges
\[
E(g\mathcal G) \coloneqq \left\{\{gx,gy\}:\{x,y\}\in E(\mathcal{G})\right\}.
\]
The space $\mathcal{S}(G)$ is invariant under this action. We denote by $\mathcal {M}(G,\mathcal{S}(G))$ the set of $G$-invariant Borel probability measures on $\mathcal{S}(G)$.

 We will use the following characterization as our definition; see \cite[Proposition 29.5]{KechrisMiller2004} for the proof that it coincides with Gaboriau's definition of cost given in \cite[Définitions I.5 (3)]{gaboriau2000cout}.
	
	\begin{defn}\label{defn: cost}
		The \emph{cost} of a countable group $G$ is
		\begin{equation}\label{eq: cost def}
			\cost(G) = \inf\left\{ \frac{1}{2} \int_{\mathcal S(G)} \deg_{\mathcal G}(e_G)\,d\mu(\mathcal G) :\mu\in \mathcal{M}(G,\mathcal S(G))\right\}.
		\end{equation}
	\end{defn}
    
For the foundational results on cost and their applications to orbit equivalence, we refer to Gaboriau's seminal paper \cite{gaboriau2000cout}, as well as to Part III of the book of Kechris and Miller \cite{KechrisMiller2004}, which gives a detailed account of the theory. For surveys of subsequent developments and of the role of cost in measured and asymptotic group theory, we refer to Gaboriau's ICM survey \cite{Gaboriau2010}, to Gaboriau's lecture notes \cite{Gaboriaulecturenotes} and to Abért's ICM survey \cite{AbertICM}.
\subsection{Preliminary results}

We will use the following theorem of Gaboriau.
\begin{thm}[{\cite[Theorem 3.4]{gaboriau2002orbit}}]\label{thm:gaboriau}
    Let $G$ be a countable group and let $N\lhd G$ be an infinite normal subgroup such that $N$ has infinite index in $G$ and $\mathrm{cost}(N)<\infty$. Then $\mathrm{cost}(G)=1$.
\end{thm}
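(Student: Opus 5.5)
We work with free p.m.p.\ actions rather than directly with Definition~\ref{defn: cost}; by \cite[Proposition 29.5]{KechrisMiller2004} the two viewpoints agree. Since $\mathrm{cost}(G)\geq 1$ always holds for infinite $G$, it suffices to show $\mathrm{cost}(G)\leq 1+\varepsilon$ for every $\varepsilon>0$. Fix a free p.m.p.\ action $G\curvearrowright (X,\mu)$, to be chosen below. Let $\mathcal R$ be the orbit equivalence relation of $G$ and $\mathcal S\subseteq \mathcal R$ that of $N$. Because $N\lhd G$, each $\gamma\in G$ maps $\mathcal S$-classes onto $\mathcal S$-classes, so $G/N$ permutes the $\mathcal S$-classes inside each $\mathcal R$-class. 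Because $[G:N]=\infty$ and the action is free, every $\mathcal R$-class is a union of infinitely many $\mathcal S$-classes. The strategy is to put an expensive graphing of $\mathcal S$ on only a tiny $\mathcal S$-invariant part of the space, and then hook everything else onto it with one edge per point.

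\textbf{Step 1 (small complete $\mathcal S$-invariant set).} Given $\delta>0$, we find a Borel set $Z\subseteq X$ with three properties: $Z$ is $\mathcal S$-invariant, $\mu(Z)\leq\delta$, and $Z$ meets every $\mathcal R$-class. To make this possible we choose the action so that it factors onto a free p.m.p.\ action of the infinite group $G/N$. For instance, take $X=X_1\times W$, where $G$ acts diagonally, $G\curvearrowright X_1$ is free, and $G$ acts on $W$ through a free p.m.p.\ action of $G/N$ (say a Bernoulli shift). Then $N$ acts trivially on $W$, so any set of the form $X_1\times W_0$ is $\mathcal S$-invariant. The $\mathcal R$-saturation of $X_1\times W_0$ is $X_1\times (G/N)W_0$. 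Since $G/N$ is infinite, its orbit equivalence relation on $W$ is aperiodic. Hence there is a complete section $W_0$ of arbitrarily small measure, which we take with $\mu_W(W_0)\leq\delta$. Finally set $Z=X_1\times W_0$.

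\textbf{Step 2 (the graphing).} By hypothesis, for any $\eta>0$ there is a graphing $\Phi$ of $\mathcal S$ with cost at most $\mathrm{cost}(N)+\eta<\infty$. We must also arrange that the $N$-action on $X$ realises a finite cost; this is discussed below. Because $Z$ is $\mathcal S$-invariant, the restriction $\Phi|_Z$ generates $\mathcal S|_Z$ and has cost at most $(\mathrm{cost}(N)+\eta)\mu(Z)$, up to an arbitrarily small error. This uses the fact that the cost of an $\mathcal S$-invariant piece is additive: the edge measure of $\Phi$ can be taken to be evenly spread, or one applies the ergodic-decomposition form of cost. Next, since $Z$ meets every $\mathcal R$-class, a standard selection argument (Lusin--Novikov) gives a Borel map $f\colon X\setminus Z\to Z$ with $x\,\mathcal R\, f(x)$. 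The graphing $\Phi|_Z\cup\{(x,f(x))\}$ generates $\mathcal R$, and its cost is at most
\[
(\mathrm{cost}(N)+\eta)\,\delta+1 .
\]
Letting $\delta\to 0$ yields $\mathrm{cost}(G)\leq 1$.

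\textbf{Main obstacle.} The delicate point is reconciling Step 1 with the finiteness of the cost of $\mathcal S$. The hypothesis $\mathrm{cost}(N)<\infty$ only says that \emph{some} free action of $N$ has finite cost, whereas we need the restriction to $N$ of a suitable $G$-action (of the product form of Step 1) to have finite cost. I would first try to take $X_1$ to be the action co-induced from a free $N$-action $N\curvearrowright (Y,\nu)$ of finite cost. Its restriction to $N$ contains $N\curvearrowright Y$ as a factor, in a form close to a product with $Y$. I would then try to show that the cost of $\mathcal S$ on $X_1\times W$ is still at most $\mathrm{cost}(N\curvearrowright Y)$, using that $N$ acts trivially on $W$. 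If this comparison fails in general, the fallback is to use an $N$-action of maximal cost, such as the Bernoulli shift (Abért--Weiss). Its cost equals the supremum of the costs of free $N$-actions, so finiteness transfers to every action, and in particular to all restrictions once one knows this supremum is finite. In either case only the bound on the cost of $\mathcal S$ is used; all other steps are soft measure-theoretic arguments.
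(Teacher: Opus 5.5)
\textbf{Verdict: genuine gap.} The paper does not prove this theorem; it cites Gaboriau. Your reduction (pass to a small $\mathcal S$-invariant complete section $Z$) is the right one, but Step~2 fails, and the missing step is exactly where the hypotheses on $N$ are needed.

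The graphing $\Phi|_Z\cup\{(x,f(x)):x\in X\setminus Z\}$ does not generate $\mathcal R$. Each point of $X\setminus Z$ is a leaf attached to a single point of $Z$. So the connected components of this graphing are exactly the $\mathcal S$-classes contained in $Z$, each with pendant leaves, and leaves can never join two distinct $\mathcal S$-classes. Yet $Z$ meets many $\mathcal S$-classes of one $\mathcal R$-class. The $\mathcal S$-classes of $[(x,w)]_{\mathcal R}$ lying in $Z$ correspond bijectively to the points of $(G/N)w\cap W_0$. A complete section of an aperiodic p.m.p.\ relation meets almost every class infinitely often: on the union of the classes it meets only finitely often one could select a Borel transversal, which forces that union to be null. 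A sanity check confirms something essential is missing. You never use that $N$ is infinite, and with $N=\{e\}$ (cost $0$) your argument would give $\mathrm{cost}(G)\le 1$ for every infinite countable $G$, e.g.\ $F_2$, whose cost is $2$.

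\textbf{The missing step.} You need a cheap way to generate $\mathcal R|_Z$ from $\mathcal S|_Z$; this is where normality and infiniteness of $N$ enter in Gaboriau's argument.
\begin{enumerate}
\item Enumerate $G=\{g_1,g_2,\dots\}$ and set $D_i=Z\cap g_i^{-1}Z$. This set is $\mathcal S$-invariant, and $g_i$ maps $\mathcal S$-classes onto $\mathcal S$-classes, both because $N\lhd G$.
\item Since $N$ is infinite and acts freely, $\mathcal S$ is aperiodic. Hence $D_i$ contains a complete section $A_i$ of $\mathcal S|_{D_i}$ with $\mu(A_i)<\varepsilon 2^{-i}$.
\item For $z\in D_i$, pick $a\in A_i$ with $z\,\mathcal S\,a$. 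Then $g_iz\,\mathcal S\,g_ia$, so $z$ and $g_iz$ are joined inside $Z$ by $\Phi|_Z$ together with the single edge $(a,g_ia)$.
\item Thus $\Phi|_Z\cup\{g_i|_{A_i}\}_{i\ge 1}$ generates $\mathcal R|_Z$. Adding your leaves gives a graphing of $\mathcal R$ of cost at most $1+(c+\eta)\delta+\varepsilon$, where $c$ is the cost of $N\curvearrowright X_1$. In effect this is Gaboriau's induction formula $\mathrm{cost}(\mathcal R)-1=\mathrm{cost}(\mathcal R|_Z)-\mu(Z)$ combined with his normal-subrelation criterion.
\end{enumerate}

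\textbf{Smaller points.}
\begin{enumerate}
\item To bound the cost on $Z$, use the product graphing $\Phi_1\times\mathrm{id}_W$, where $\Phi_1$ is a graphing of the $N$-orbit relation on $X_1$. Its restriction to $Z=X_1\times W_0$ costs exactly $C(\Phi_1)\,\mu_W(W_0)$. This replaces your unproved additivity claim for an arbitrary graphing of $\mathcal S$.
\item Your first idea for the ``main obstacle'' does work, once you supply one lemma. The map $f\mapsto f(e)$ makes $N\curvearrowright Y$ a factor of the restriction to $N$ of the co-induced action. This survives taking a product with a free $G$-action, which you may need for freeness. The lemma is that the cost of an action is at most the cost of any free factor: pull back a graphing built from group-element pieces, and freeness of the factor ensures the pull-back still generates.
\item The Abért--Weiss fallback does not work as stated. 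The hypothesis $\mathrm{cost}(N)<\infty$ gives no control on the supremum of the costs of free $N$-actions.
\end{enumerate}
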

In \cite{gaboriau2002orbit} it is assumed that $N$ is finitely generated, but this can be replaced by the assumption that $N$ has finite cost; see the footnote on page 884 of \cite{HutchcroftPete2020} and \cite[Theorem 2.56]{Gaboriaulecturenotes}

Thanks to Theorem \ref{thm:gaboriau}, in order to establish Theorem \ref{thm: free burnside main theorem} it suffices to find a normal subgroup of the free Burnside group $B(m,n)$ that has infinite index and finite cost. This is guaranteed by Lemma \ref{lem:short_exact_sequence} below. Before stating it, we state some useful preliminary results that will be used later.
We denote by $C_G(g)$ the centralizer of $g\in G$.
\begin{lem}\label{lem: centralizers}
		Let $G$ be a group with the following property: for every non-trivial element $g\in G\backslash\{e_G\}$, the centralizer $C_G(g)$ is abelian. If $g,h\in G\backslash\{e_G\}$ are such that $C_G(g)\cap C_G(h)\neq \{e_G\}$, then $C_G(g)=C_G(h)$.
	\end{lem}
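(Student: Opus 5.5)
Pick a non-trivial element $z\in C_G(g)\cap C_G(h)$. The plan is to show $C_G(g)=C_G(z)=C_G(h)$, so it suffices to prove the following claim: if $g\neq e_G$ and $z\in C_G(g)\setminus\{e_G\}$, then $C_G(g)=C_G(z)$. Applying the claim once to the pair $(g,z)$ and once to $(h,z)$ then gives the lemma.

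For the claim, the two inclusions come from the abelian-centralizer hypothesis applied to $g$ and to $z$ separately.

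\emph{Step 1: $C_G(g)\subseteq C_G(z)$.} Since $C_G(g)$ is abelian and $z\in C_G(g)$, every $x\in C_G(g)$ commutes with $z$. Hence $x\in C_G(z)$.

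\emph{Step 2: $C_G(z)\subseteq C_G(g)$.} Since $z$ commutes with $g$, we have $g\in C_G(z)$. Because $z\neq e_G$, the centralizer $C_G(z)$ is abelian by hypothesis. So every $y\in C_G(z)$ commutes with $g$, that is, $y\in C_G(g)$.

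Combining the two steps gives $C_G(g)=C_G(z)$, and the same argument with $h$ in place of $g$ gives $C_G(h)=C_G(z)$.

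There is no real obstacle. The one point needing care is that the hypothesis is used for $z$ as well as for $g$ and $h$, and this is exactly why $z$ must be chosen non-trivial, which the assumption $C_G(g)\cap C_G(h)\neq\{e_G\}$ allows.
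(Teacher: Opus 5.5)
Your proof is correct and is essentially the paper's argument: both rest only on the abelianness of the centralizer of the common non-trivial element and of $C_G(g)$, $C_G(h)$. The only difference is bookkeeping. You pivot through $C_G(z)$ to get $C_G(g)=C_G(z)=C_G(h)$, whereas the paper first uses that $C_G(w)$ is abelian to show $g$ and $h$ commute, then obtains $C_G(g)\subseteq C_G(h)$ and concludes by symmetry.
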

	\begin{proof}
    Let $w\in C_G(g)\cap C_G(h)$ be a non-trivial element. Then by our hypothesis $C_G(w)$ is abelian. Since $g,h\in C_G(w)$, we get that $g$ and $h$ commute. If $z\in C_G(g)$, then since $h\in C_G(g)$ and $C_G(g)$ is abelian, we have that $z$ and $h$ commute. Hence $z\in C_G(h)$. This shows that $C_G(g)\subseteq C_G(h)$, and by symmetry, that these two centralizers are the same.
\end{proof}

The following can be found, for example, in \cite[Theorem VI.3.2]{Adian1979}, \cite[Theorem 19.5]{Olshanskii1991}, or \cite[Lemma 4.18]{OlshanskiiSapir2002}.
\begin{lem}\label{lem: cyclic centralizers}
    Let $G=B(m,n)$ be the free Burnside group with $m\ge 2$ generators and odd exponent $n\ge 665$. Then the centralizer of every non-trivial element in $G$ is cyclic of order $n$.
\end{lem}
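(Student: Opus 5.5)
This lemma is a classical result that the paper quotes from Adian, Ol'shanskii, and Ol'shanskii--Sapir. I would not reprove it from scratch. Instead I would reduce it to two standard structural facts about $B(m,n)$ for odd $n\ge 665$, both coming out of the Novikov--Adian machinery or Ol'shanskii's graded diagrams:

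\begin{enumerate}
\item every non-trivial element $g$ lies in a unique maximal cyclic subgroup $M(g)$, and $M(g)\cong \Z/n\Z$;
\item if $x$ commutes with a non-trivial element $g$, then $x\in M(g)$.
\end{enumerate}

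Granting these two facts, the lemma follows quickly. Let $g\neq e_G$. By (i), $M(g)$ is cyclic of order $n$ and contains $g$. Since $M(g)$ is abelian, $M(g)\subseteq C_G(g)$. Conversely, (ii) gives $C_G(g)\subseteq M(g)$. Hence $C_G(g)=M(g)$ is cyclic of order $n$.

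For (i), I would argue as follows. A cyclic subgroup $\langle h\rangle$ containing $g$ has order dividing $n$. In Adian's theory every non-trivial element is conjugate to a power of a ``period'', that is, a cyclically reduced word of some rank $\alpha$ whose $n$-th power is a defining relator. The period itself has order exactly $n$, since its $n$-th power is trivial and its smaller powers are non-trivial by the solution of the word problem for $B(m,n)$. The cyclic subgroup generated by the conjugated period is then the maximal one containing $g$, and it is unique.

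For (ii), take $x$ with $xg=gx$. I would conjugate so that $g=A^k$ with $A$ a period and $0<k<n$. Since $n$ is odd and $M(g)$ is cyclic of order $n$, one can try to recover $A$ as a power of $g$ when $\gcd(k,n)=1$. In general one needs $x$ to commute with $A$ itself. The equation $xA^kx^{-1}=A^k$ gives an annular (van Kampen) diagram whose two boundary labels are long powers of $A$. The contiguity and graded small cancellation estimates of Ol'shanskii \cite[Ch.\ 6--7, Theorem 19.5]{Olshanskii1991} then force the annulus to be thin, and this shows $x\in\langle A\rangle$. Uniqueness of the maximal cyclic subgroup then gives (ii).

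The main obstacle is this annular diagram analysis in (ii). It is exactly the deep part of the graded diagram theory, and the hypothesis $n\ge 665$, or ``$n$ large'', enters through the contiguity constants there. In the paper I would simply cite \cite[Theorem VI.3.2]{Adian1979} or \cite[Theorem 19.5]{Olshanskii1991} for it, as the statement already indicates.
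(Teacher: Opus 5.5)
Your proposal is correct and follows the paper's approach: the paper gives no proof and only cites \cite[Theorem VI.3.2]{Adian1979}, \cite[Theorem 19.5]{Olshanskii1991} and \cite[Lemma 4.18]{OlshanskiiSapir2002}, and your facts (i) and (ii) together just restate the lemma (a cyclic $C_G(g)$ of order $n$ is automatically the unique maximal cyclic subgroup containing $g$), so all the content lies in the citation. One small correction: for the full range $n\ge 665$ only Adian's theorem applies, since Ol'shanskii's graded-diagram results, including the contiguity estimates you invoke in (ii), are proved only for odd $n>10^{10}$.
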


A subgroup $H$ of a group $G$ is called a \emph{$Q$-subgroup} if for every normal subgroup $K\lhd H$ of $H$, we have that $\llangle K\rrangle_G\cap H=K$, where $\llangle K\rrangle_G$ denotes the normal closure of $K$ in $G$. 

\begin{thm}[{\cite{Ivanov2003}, \cite[Theorem 4.1]{atabekyan2011}}]\label{thm:atabekyan}
    Let $m \ge 2$ and let $n\ge 1003$ be odd. Then every non-cyclic subgroup of the free Burnside group $B(m,n)$ contains a $Q$-subgroup isomorphic to $B(\infty,n)$.
\end{thm}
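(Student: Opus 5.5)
This statement is quoted from the literature (Ivanov, Atabekyan), and I would follow the strategy of those works rather than try to derive it from the results above. The plan has three parts.

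First, I would produce a suitable pair of elements in the given subgroup. Let $H\le B(m,n)$ be non-cyclic. By Lemma \ref{lem: cyclic centralizers}, every non-trivial centralizer is cyclic of order $n$. Lemma \ref{lem: centralizers} then shows that two elements of $H$ either commute and generate a cyclic group, or generate a non-cyclic subgroup. Since $H$ is not cyclic, it contains two elements $a,b$ that do not commute. Replacing them by conjugates and suitable products, I would arrange that $a$ and $b$ are cyclically reduced and not contained in a common cyclic subgroup.

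Second, I would build the free generators of the desired copy of $B(\infty,n)$. In the standard approach these are words
\[
x_i = a^{k_{i,1}} b\, a^{k_{i,2}} b \cdots a^{k_{i,r}} b, \qquad i\ge 1,
\]
with exponent sequences chosen so that the family $\{x_i\}$ satisfies a strong small-cancellation condition relative to $B(m,n)$. Concretely, the sequences are aperiodic, and distinct pieces have only small overlaps, in the sense of Ol'shanskii's graded diagrams or Adian's periodic words.

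Third, I would check two properties of $L=\langle x_1,x_2,\dots\rangle$ and of the homomorphism $\varphi\colon F_\infty\to B(m,n)$, $y_i\mapsto x_i$.

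\begin{itemize}
\item[(a)] The kernel of $\varphi$ is exactly the normal closure of all $n$-th powers, so $L\cong B(\infty,n)$.
\item[(b)] For every normal subgroup $K\lhd L$ we have $\llangle K\rrangle_G\cap L=K$.
\end{itemize}

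I would derive both from a single statement. For any set $\mathcal R$ of words in the $y_i$, the group $B(m,n)/\llangle \varphi(\mathcal R)\rrangle$ is again a graded quotient in which the images of the $x_i$ generate $B(\infty,n)/\llangle\mathcal R\rrangle$. To prove this, take a reduced graded diagram over $B(m,n)$ with the extra relators $\varphi(\mathcal R)$ and boundary label a word in the $x_i$. Using the small-cancellation properties of the $x_i$, I would show that every cell can be ``pulled back'' along the chosen words, so that the diagram transforms into a diagram over $B(\infty,n)/\llangle \mathcal R\rrangle$ in the letters $y_i$. Taking $\mathcal R=\emptyset$ gives (a), and taking $\mathcal R=K$ gives (b).

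The main obstacle is this pulling-back step. One must control contiguity subdiagrams between $n$-periodic cells and the boundary made of $x_i$-segments, uniformly over arbitrary $\mathcal R$. This is where the exponent must be large, with the explicit bound $n\ge 1003$ coming from the estimates in Adian's theory. To get through it, I would first work with $\mathcal R$ consisting of $n$-th powers only, proving an analogue of Adian's lemma on long periodic subwords. I would then extend the argument to arbitrary $\mathcal R$ by treating relators from $\varphi(\mathcal R)$ as cells of a new rank that contact $x_i$-boundaries only along whole $x_i$-blocks.
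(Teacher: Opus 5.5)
The paper does not prove this statement. It imports it as a black box from Ivanov (for very large odd $n$) and Atabekyan (for odd $n\ge 1003$), and uses it only for the case $m=2$ of Lemma~\ref{lem:short_exact_sequence}. Read as a stand-alone argument, your proposal has a gap the size of the theorem.

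\textbf{What you actually establish.} Only the first step is carried out: an abelian subgroup of $B(m,n)$ lies in a single cyclic centralizer, so a non-cyclic $H$ contains non-commuting $a,b$. Even there, you may conjugate $H$ as a whole (harmless, since the conclusion is invariant under automorphisms of $B(m,n)$), but not $a$ and $b$ separately. So you cannot in general make both cyclically reduced.

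\textbf{What is only announced.} Everything after the first step is asserted rather than proved. You never specify the exponent sequences. You never formulate the small-cancellation condition the $x_i$ should satisfy relative to the periodic relators of all ranks. The ``pulling-back'' of diagrams, which you yourself identify as the main obstacle, is precisely the content of the cited papers.

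\textbf{Why the mechanism for (b) fails.} Your unifying claim is that $B(m,n)/\llangle\varphi(\mathcal R)\rrangle$ is ``again a graded quotient'', with $\varphi(\mathcal R)$ added as cells of a new rank. This cannot work for arbitrary $\mathcal R$. The contiguity and cancellation lemmas for graded diagrams depend on each rank consisting of $n$-th powers of simple, aperiodic periods, one relator per period, satisfying strong small-cancellation conditions relative to lower ranks. An arbitrary normal subgroup $K\lhd L$ provides relators with none of this structure. For instance, $K$ may contain $x_1$, which is not a power at all. With $\mathcal R=K$, as you propose, $\mathcal R$ contains both $r$ and $r^2$ for any $r\in K$; these overlap along an entire relator, and $r^2$ is a proper power.

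\textbf{What is actually needed.} A correct argument must leave the $\mathcal R$-cells completely unconstrained. It may use only two things: how their boundaries decompose into $x_i^{\pm1}$-blocks, and the small-cancellation properties of these blocks relative to the periodic relators. From these alone it must convert a diagram whose boundary label is a word in the $x_i$ into a diagram over the presentation of $B(\infty,n)/\llangle\mathcal R\rrangle$ in the letters $y_i$. Your two-stage plan (first $\mathcal R$ consisting of $n$-th powers, then arbitrary $\mathcal R$ as a new rank) gives no indication of how to do this.
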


The following observation is closely related to the subgroup constructions used by Feldkamp and Kionke in the proof of \cite[Corollary 0.2]{feldkampkionke2023} and Remark 2.3 therein.
\begin{lem}\label{lem:short_exact_sequence}
    Let $n\ge 2$ be odd such that the free Burnside group $B(2,n)$ is infinite. Suppose that $m\geq 3$, or that $m\geq 2$ and $n$ further satisfies that $B(m,n)$ contains a $Q$-subgroup isomorphic to $B(\infty, n)$. Then $B(m,n)$ fits into a short exact sequence 
    \[
    1\longrightarrow N\longrightarrow B(m,n)\longrightarrow Q\longrightarrow 1,
    \]
    where $Q$ is infinite and $N$ contains a finitely generated infinite subgroup.
\end{lem}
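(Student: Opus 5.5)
The plan is to build $N$ as the kernel of a surjection onto a free Burnside group of smaller rank. We arrange that $N$ contains a copy of $B(2,n)$, which is finitely generated and infinite by hypothesis.

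\emph{Case $m\geq 3$.} Write $B(m,n)$ as the free product $B(2,n)*B(m-2,n)$ in the variety of groups of exponent $n$. Concretely, the free generators $x_1,\dots,x_m$ split into $\{x_1,x_2\}$ and $\{x_3,\dots,x_m\}$. Let $\pi\colon B(m,n)\to B(m-2,n)$ be the surjection sending $x_1,x_2\mapsto e$ and fixing $x_3,\dots,x_m$; set $N=\ker\pi$ and $Q=B(m-2,n)$.
\begin{itemize}
\item \emph{$N$ contains $B(2,n)$.} The retraction $\rho\colon B(m,n)\to B(2,n)$ that kills $x_3,\dots,x_m$ shows that the subgroup generated by $x_1,x_2$ is isomorphic to $B(2,n)$, by the universal property of relatively free groups. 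This subgroup lies in $N$, so $N$ contains a finitely generated infinite subgroup.
\item \emph{$Q$ is infinite.} If $m-2\geq 2$, then $B(m-2,n)$ surjects onto $B(2,n)$, so $Q$ is infinite. If $m=3$, then $Q=B(1,n)=\Z/n$ is finite, so this choice fails. Instead take $\pi\colon B(3,n)\to B(2,n)$ killing only $x_3$, with $Q=B(2,n)$ infinite. Then $N=\llangle x_3\rrangle$, and we need a finitely generated infinite subgroup inside it. I would use the conjugates $x_3$ and $x_1x_3x_1^{-1}$, and show they generate an infinite subgroup via a retraction argument. For instance, map $B(3,n)\to B(2,n)$ by $x_1\mapsto a$, $x_2\mapsto e$, $x_3\mapsto b$. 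Then $\langle x_3, x_1x_3x_1^{-1}\rangle$ maps onto $\langle b, aba^{-1}\rangle$, and it suffices that this subgroup of $B(2,n)$ is infinite. A cleaner uniform choice is to kill $x_1$ only: the normal closure $\llangle x_1\rrangle$ contains the elements $x_2^ix_1x_2^{-i}$. Showing that two of these generate an infinite group is the main obstacle. One could try $\langle x_1, x_2x_1x_2^{-1}\rangle$ and argue that it is non-cyclic, since $x_1$ and $x_2x_1x_2^{-1}$ do not commute (by Lemma \ref{lem: cyclic centralizers}, the centralizer of $x_1$ is $\langle x_1\rangle$). However, a non-cyclic finitely generated subgroup need not be infinite without further input.
\end{itemize}
The simplest way around this obstacle is to take $m\geq 4$ via the split above, and to handle $m=3$, and in fact all $m$, through the $Q$-subgroup approach below. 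That approach applies to $m=3$ as well as long as $n\geq 665$ via Theorem \ref{thm:atabekyan}-type results. Alternatively, for $m=3$ I would map onto $B(2,n)$ via $x_1\mapsto a$, $x_2\mapsto b$, $x_3\mapsto e$, and look for infinite subgroups of $\llangle x_3\rrangle$ of the form $\langle x_3, x_1x_3x_1^{-1}\rangle$. Mapping $x_3\mapsto b$, $x_1\mapsto a$ gives $\langle b, aba^{-1}\rangle$, and for $n$ large such two-generated subgroups are known to be free Burnside, in particular infinite.

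\emph{Case of a $Q$-subgroup $H\cong B(\infty,n)$.} Let $y_1,y_2,\dots$ be free generators of $H$, and let $K\lhd H$ be the normal closure in $H$ of $\{y_3,y_4,\dots\}$, so that $H/K\cong B(2,n)$ is infinite. Set $N=\llangle K\rrangle_{B(m,n)}$ and $Q=B(m,n)/N$.
\begin{itemize}
\item \emph{$Q$ is infinite.} The $Q$-property gives $N\cap H=K$, so $H/K\cong B(2,n)$ embeds into $Q$.
\item \emph{$N$ contains a finitely generated infinite subgroup.} Since $K$ is itself isomorphic to $B(\infty,n)$ (it contains $\langle y_3,y_4\rangle\cong B(2,n)$ by a retraction of $H$), the subgroup $\langle y_3,y_4\rangle\leq K\leq N$ is finitely generated and infinite.
\end{itemize}
This case is routine. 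The only care needed is verifying that $\langle y_3,y_4\rangle\cong B(2,n)$ via the retraction of $B(\infty,n)$ onto it.
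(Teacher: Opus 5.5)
Your $m\ge 4$ argument is correct: killing $x_1,x_2$ gives $Q=B(m-2,n)$, and $N\supseteq\langle x_1,x_2\rangle\cong B(2,n)$. Your $Q$-subgroup case is also correct: with $K=\llangle y_3,y_4,\dots\rrangle_H$ you get $H/K\cong B(2,n)\hookrightarrow Q$ and $\langle y_3,y_4\rangle\le K\le N$. That route is slightly more direct than the paper's shift endomorphism. The parenthetical claim $K\cong B(\infty,n)$ is unjustified, but you never use it.

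\textbf{The gap is the case $m=3$.} There the lemma assumes only that $n$ is odd and $B(2,n)$ is infinite; no $Q$-subgroup is given. So falling back on the $Q$-subgroup case does not prove the statement. Moreover, Theorem~\ref{thm:atabekyan} requires $n\ge 1003$, not $n\ge 665$. This matters: the $m\ge 3$ branch is exactly what yields the bound $n\ge 665$ for $m\ge 3$ in the main theorem. Your other route needs $\langle b,aba^{-1}\rangle\le B(2,n)$ to be infinite. You support this only with an uncited assertion that such two-generated subgroups are free Burnside for large $n$. Nothing in the hypotheses gives this, and, as you note yourself, non-cyclicity alone does not imply infinitude.

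\textbf{The missing idea is to pass to a finite-index subgroup rather than a two-generated subgroup of conjugates.} Take $G=B(3,n)$ and $N=\llangle x_3\rrangle_G$, so that $G/N\cong B(2,n)$ is infinite. Let $L=\langle x_1,x_3\rangle\cong B(2,n)$.
\begin{itemize}
\item The image of $L$ in $G/N$ is the cyclic group $\langle \bar x_1\rangle$ of order $n$, so $L\cap N$ has index $n$ in $L$.
\item Hence $L\cap N\le N$ is infinite, because $L$ is.
\item By Schreier's lemma it is finitely generated; explicitly, $L\cap N=\langle x_1^{i}x_3x_1^{-i}:0\le i\le n-1\rangle$.
\end{itemize}
In other words, use all $n$ conjugates, not two. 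This works uniformly for every $m\ge 3$ and is exactly the paper's argument, which kills $x_1$ and uses $L=\langle x_1,x_2\rangle$.
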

\begin{proof}
Let us denote throughout the proof $G=B(m,n)$.

Suppose first that $B(m,n)$ is infinite and contains a $Q$-subgroup $H\cong B(\infty, n)$. Let us choose a free Burnside generating set $\{x_1,x_2,\ldots\}$ of $H$, and consider the epimorphism $\theta:H\to H$ defined by $\theta(x_1)=e_G$ and $\theta(x_{i+1})=x_i$ for each $i\ge 1$. Let us set $K\coloneqq \ker(\theta)\lhd H$, and remark that $H/K\cong H$. 
    
    Denote by $N\coloneqq  \llangle K\rrangle_G$ the normal closure of $K$ in $G$. Then, since $H$ is a $Q$-subgroup of $G$, we have that $N\cap H=K$. This implies that the composition of the identity map $H\to G$ with the quotient map $G\to G/N$ has kernel $K$. Therefore $H/K\cong H\cong B(\infty,n)$ embeds as a subgroup of $G/N$. Since $B(2,n)$ is an infinite quotient of $B(\infty,n)$, we conclude that $G/N$ is infinite. 
    
    Let us show that $N$ as defined above contains an infinite and finitely generated subgroup. Let $L\coloneqq \langle x_1,x_2\rangle$. Note that $L\cong B(2,n)$, since a subset of a free Burnside basis freely generates the corresponding free Burnside subgroup. Let us define
    \[
N_0\coloneqq \ker\left(\theta|_{L}:L\to \langle x_1\rangle \cong \Z/n\Z \right)=K\cap L.
    \]
Then $N_0\leqslant N$ is a finite index subgroup of $L$. Since $L$ is infinite and finitely generated, so is $N_0$. This proves the statement of the lemma.

Now suppose that $m\ge 3$. Since $B(m-1,n)$ surjects onto $B(2,n)$ and the latter is infinite, we get that $B(m-1,n)$ is infinite. Let $\{x_1,\ldots, x_m\}$ be a free Burnside generating set of $G$, and denote $N\coloneqq \llangle x_1\rrangle_G$. Then $G/N\cong B(m-1,n)$, so $N$ has infinite index. The fact that $N$ contains an infinite finitely generated subgroup is proved analogously as it was done in the previous case above. Indeed, letting $L\coloneqq \langle x_1,x_2\rangle \cong B(2,n)$ and $\rho\colon L\to \Z/n\Z$ be given by $\rho(x_1)=e_G$ and $\rho(x_2)=x_1$, we have that $\ker(\rho)=L\cap N\leq N$, and $[L:\ker(\rho)]<\infty$. Thus $\ker(\rho)$ is an infinite finitely generated subgroup of $N$.
\end{proof}

\subsection{An invariant random connected graph with finite cost}
We next show that the presence of an infinite finitely generated subgroup, together with the assumption of abelian centralizers of bounded order, is enough to guarantee finite cost. The idea is to start with a connected Cayley graph on an infinite finitely generated subgroup $N_0\leq N$, and then enlarge $N_0$ through an increasing sequence
\[
N_0\leq N_1\leq N_2\leq \cdots, \text{ with } N=\bigcup_{j\geq 0}N_j.
\]
At each step, we add a sparse family of random edges in order to connect the different cosets of $N_{j-1}$ inside $N_j$. The assumption on centralizers provides infinitely many independent ways of connecting any two such cosets, which allows us to use the Borel-Cantelli lemma to obtain connectivity almost surely. Choosing the probabilities of the edges to decay sufficiently fast ensures at the same time that the resulting invariant random spanning graph has finite expected degree.
\begin{prop}\label{prop: main prop}
Let $G$ be a countable group such that the centralizer $C_G(g)$ of
every non-trivial element $g\in G$ is abelian. Suppose that there exists $n\ge 2$ such that $\sup_{g\in G\backslash\{e_G\}}|C_G(g)|\le n$. Let $N$ be a subgroup of $G$ that contains an infinite finitely generated subgroup. Then $\cost(N)<\infty$.
\end{prop}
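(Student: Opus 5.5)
We will build an explicit $N$-invariant random connected spanning graph of $N$ with finite expected degree, following the outline given just before the statement. Fix an infinite finitely generated subgroup $N_0\le N$ with finite symmetric generating set $S_0$. Enumerate $N=\{g_1,g_2,\dots\}$ and set $N_j\coloneqq\langle N_0,g_1,\dots,g_j\rangle$, so that $N_0\le N_1\le\cdots$ and $N=\bigcup_j N_j$. The deterministic part of the graph is the Cayley graph edge set $\{\{x,xs\}:x\in N,\ s\in S_0\}$. Since $N_0$ is infinite, this graph is connected on each left coset $xN_0$, and it is $N$-invariant with degree $|S_0|$.

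At stage $j\ge1$ we must connect the cosets of $N_{j-1}$ lying inside a common coset of $N_j$. Because $N_j=\langle N_{j-1},g_j\rangle$, it suffices to connect $xN_{j-1}$ with $xg_jN_{j-1}$ and $xg_j^{-1}N_{j-1}$ for every $x$. We do this with independent random edges: for each $x\in N$ and each $h\in N_{j-1}$, include the edge $\{x,xhg_j\}$ independently with probability $p_{j}(h)$. The law of this edge family is invariant under left multiplication by $N$, since the edge type $\{x,xhg_j\}$ is determined by the right multiplier $hg_j$. This edge is a candidate connection from $xN_{j-1}$ (via $x$) to $xhg_jN_{j-1}=xg_j(g_j^{-1}hg_j)N_{j-1}$.

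The key point is to produce infinitely many \emph{distinct} edges between two fixed adjacent cosets $A=yN_{j-1}$ and $B=yg_jN_{j-1}$. The candidate edges are $\{ya,\,yahg_j\}$ with $a,h\in N_{j-1}$. Those landing in $B$ require $ahg_j\in g_jN_{j-1}$, which holds whenever $ah\in g_jN_{j-1}g_j^{-1}\cap N_{j-1}$. The cleanest choice is to restrict $h$ to a single element, or to a small finite set, and vary the starting point $a\in N_{j-1}$, using \emph{right} multipliers. Taking $h=e$ gives edges $\{ya,\,yag_j\}$, and $yag_j\in B$ iff $g_j^{-1}ag_j\in N_{j-1}$. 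This is where centralizers enter: the set of $a\in N_{j-1}$ with $g_j^{-1}ag_j=a'$ for a \emph{prescribed} pattern may be small, so instead I would take $h$ ranging over $N_{j-1}$ with $a=e$. The edges $\{y,\,yhg_j\}$ land in $yhg_jN_{j-1}$, which is generally a different coset.

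The honest approach is therefore to target connection of $A$ to the coset $B_h=yhg_jN_{j-1}$ for each fixed $h$, with many starting points $a$ for which $a h g_j N_{j-1}=hg_jN_{j-1}$. This holds iff $(hg_j)^{-1}a(hg_j)\in N_{j-1}$, and in particular it holds when $a$ commutes with $hg_j$. That gives only boundedly many $a$ (at most $n$), which is not enough for Borel--Cantelli. So centralizers must be used the other way. The distinct edges $\{ya, yahg_j\}$ for $a$ in an infinite set $a_1,a_2,\dots\in N_{j-1}$, with $h$ chosen as $h=a^{-1}$-type corrections, give edges $\{ya_k,\,y g_j c_k\}$ with $c_k\in N_{j-1}$. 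We need infinitely many distinct right multipliers $a_k^{-1}g_jc_k$, and we must keep $\sum p$ finite while having infinitely many independent trials. The fix is to let each edge type be used by many pairs: for $a\in N_{j-1}$, the multiplier $r_a=a^{-1}g_ja$ connects $ya$ to $yg_ja\in B$. Two values $a,b$ give the same multiplier iff $ab^{-1}\in C_G(g_j)$, so the fibers have size at most $n$. Hence the map $a\mapsto r_a$ takes infinitely many values on the infinite group $N_{j-1}$. Putting probability $q_j$ on each multiplier $r_a$ would give infinite expected degree, however, so we cannot afford all of them at once.

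The resolution is to use a single multiplier type per independent trial: choose infinitely many $a_1,a_2,\dots$ with distinct $r_{a_k}$, and include the edges $\{x,xr_{a_k}\}$ for all $x$ with probability $\varepsilon_j 2^{-k}$, independently over $k$ and $x$. The expected degree from stage $j$ is then at most $4\varepsilon_j$. Between $A$ and $B$, each type $k$ provides the edges $\{yba_k,\,yba_kr_{a_k}\}=\{yba_k,\,ybg_ja_k\}$ for every $b\in N_{j-1}\cap g_jN_{j-1}g_j^{-1}$; we want this set to be infinite. This is the main obstacle. I would handle it by choosing the enumeration so that $g_j$ normalizes enough of $N_{j-1}$. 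An alternative is to exploit bounded centralizers, via Lemma~\ref{lem: centralizers}, to show that the pairs $\{yu,yv\}$ with $u\in A$, $v\in B$, realized as multipliers $u^{-1}v=c^{-1}g_jd$ with $c,d\in N_{j-1}$, have fibers of size at most $n^2$. One could then take all of these with summable weights $w(c,d)$ chosen so that, for every pair of cosets, the total weight is infinite while each vertex's expected degree stays finite. Given this, Borel--Cantelli (second form, independent events with divergent sum) shows that every pair of adjacent cosets is almost surely connected. By induction, and since there are countably many events, the union graph is almost surely connected on every coset of every $N_j$, hence on all of $N$. Choosing $\sum_j\varepsilon_j<\infty$ bounds the expected degree by $|S_0|+4\sum_j\varepsilon_j<\infty$, so $\cost(N)<\infty$ by Definition~\ref{defn: cost}.
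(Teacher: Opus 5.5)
\textbf{There is a genuine gap, located at the step you yourself call ``the main obstacle.''} Your strategy joins two adjacent cosets $A=yN_{j-1}$ and $B=yg_jN_{j-1}$ by \emph{direct} random edges whose probabilities have divergent sum. This cannot work in general.

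Take any invariant model in which each edge $\{x,xr\}$ is included independently with probability $p(r)$, and set $I_j\coloneqq N_{j-1}\cap g_jN_{j-1}g_j^{-1}$.
\begin{itemize}
\item Pairs $(u,ur)$ with $u\in A$ and $ur\in B$ exist only for $r\in N_{j-1}g_jN_{j-1}$.
\item For each such $r$ there are exactly $|I_j|$ of them.
\item Also $\sum_r p(r)\le\mathbb{E}[\deg(e_N)]$, since the edge $\{e_N,r\}$ is present with probability at least $p(r)$.
\end{itemize}
Hence the expected number of direct $A$--$B$ edges is at most $2|I_j|\,\mathbb{E}[\deg(e_N)]$. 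Whenever $I_j$ is finite, no choice of weights, including your $w(c,d)$, can make this sum diverge while keeping the degree finite. So Borel--Cantelli is unavailable, and the $n^2$ bound on fibres of $(c,d)\mapsto c^{-1}g_jd$ does not help.

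Your other suggestion, choosing the enumeration so that $g_j$ ``normalizes enough'' of $N_{j-1}$, comes with no argument, and nothing in the hypotheses supplies it. Notice also that you never use torsion. Yet $|C_G(g)|\le n$ forces every non-trivial element to have order at most $n$, and this is the missing ingredient.

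\textbf{The fix is to give up direct edges and route through intermediate cosets of $N_{j-1}$ inside $yN_j$.} Use only the multiplier $g_j$ (with $g_j\notin N_{j-1}$), including $\{x,xg_j\}$ independently with probability $2^{-j}$. This adds only $2\cdot 2^{-j}$ to the expected degree. For $b\in N_{j-1}$ put $c\coloneqq g_jb\neq e_G$, which has order $d\le n$. Then
\[
yg_j\leadsto yc\rightarrow ycg_j\leadsto yc^2\rightarrow\cdots\rightarrow yc^{d-1}g_j=yb^{-1}\leadsto y .
\]
Here each $\leadsto$ stays inside a coset of $N_{j-1}$, which is connected by induction. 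Each $\rightarrow$ is one of the $d-1$ random edges $\{yc^\ell,yc^\ell g_j\}$, $1\le \ell\le d-1$. So this event has probability at least $2^{-j(n-1)}$.

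Centralizers are abelian and finite, and $N_{j-1}$ is infinite. Together with Lemma~\ref{lem: centralizers}, this gives infinitely many $b_i\in N_{j-1}$ for which the sets $C_G(g_jb_i)\setminus\{e_G\}\supseteq\{c_i^\ell:1\le\ell\le d_i-1\}$ are pairwise disjoint. The corresponding events therefore depend on disjoint coordinates and are independent. The second Borel--Cantelli lemma then connects $y$ to $yg_j$ almost surely. This is the paper's argument.
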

\begin{proof}
	If $N$ is finitely generated, then its cost is at most the size of a finite generating set. Hence, to prove the statement of the proposition, we may assume that $N$ is countable but not finitely generated.
	
	 Choose an infinite finitely generated subgroup $N_0\leqslant N$ and let $S\subseteq N_0\backslash \{e_N\}$ be a finite generating set for $N_0$. In what follows we will define a probability measure $\mu\in \mathcal{M}(N,\mathcal{S}(N))$ such that 
\begin{equation}\label{eq: desired cost upper bound}
	\frac{1}{2}\int_{\mathcal{S}(N)}\deg_{\mathcal{G}}(e_N)\ d\mu(\mathcal{G})\leq |S|+1.
\end{equation}
By the definition of cost in Definition \ref{defn: cost} (Equation \eqref{eq: cost def}), this implies the statement of the proposition.
	 	 
Let us choose a countable sequence $(a_j)_{j\ge 1}\subseteq N$ such that, defining inductively $N_j:=\langle N_{j-1},a_j\rangle$, we have $a_j\notin N_{j-1}$ for every $j\ge 1$ and $N=\bigcup_{j\geq 0}N_j$.

Define $E_0:=\left\{\{x,xs\}:x\in N,\ s\in S\right\}$. Thus, $E_0$ is the edge set of the Cayley graph of $N_0$ with respect to $S$, repeated on every left coset of $N_0$ in $N$. Next, let
	\[
	\Omega:=\{0,1\}^{\mathbb{N}_{\geq 1}\times N},
	\]
	endowed with its product Borel $\sigma$-algebra, and consider the product
	probability measure
	\[
	\mathbb P\coloneqq  \bigotimes_{j\geq 1} \bigotimes_{x\in N} \left( (1-2^{-j})\delta_0+2^{-j}\delta_1
	\right).
	\]
	Thus, the random variables
	\begin{align*}
	\Omega&\to \{0,1\}\\
	\omega&\mapsto \omega(j,x),
	\end{align*}
	for $j\geq 1$ and $x\in N$, are independent and satisfy $\P\left(\omega(j,x)=1\right)=2^{-j}=1-\P\left(\omega(j,x)=0\right)$. We define a map $\Psi:\Omega\to \mathrm{Graph}(N)$ as follows. For every $\omega\in\Omega$, define a graph $\Psi(\omega)$ with vertex set $N$
	by declaring its set of edges to be
	
	\[
	E(\Psi(\omega))	\coloneqq E_0\cup \bigcup_{j\geq 1} \left\{
	\{x,xa_j\}:x\in N,\ \omega(j,x)=1	\right\}.\]

The map $\Psi:\Omega\longrightarrow\mathrm{Graph}(N)$ is measurable. Indeed, recall that $\mathrm{Graph}(N)=\{0,1\}^{[N]^2}$ is endowed with the product Borel $\sigma$-algebra. It therefore suffices to verify measurability in each coordinate $e\in[N]^2$. For such an edge $e$, we have
\begin{equation*}
\{\omega\in\Omega:e\in E(\Psi(\omega))\}=    \begin{cases}
    \Omega, &\text{ if }e\in E_0\text{, and}\\
    &\\
\displaystyle\bigcup_{\substack{j\geq1,\ x\in N\\ \{x,xa_j\}=e}} \{\omega\in\Omega:\omega(j,x)=1\}, &\text{ if }e\notin E_0.
    \end{cases}
 \end{equation*}

 The latter is a countable union of cylinder events, and is therefore
measurable. Thus $\Psi$ is measurable.
	
 We set $\mu\coloneqq \Psi_*\P$. Let us first estimate the average degree of a graph sampled by $\mu$. We do this in the next claim.
    
	\begin{claim}\label{claim: average degree}
		We have
		\[
		\frac{1}{2}\int_{\mathrm{Graph}(N)} \deg_{\mathcal G}(e_N)\ d\mu(\mathcal G)\leq |S|+1,
		\]
		so that Equation \eqref{eq: desired cost upper bound} holds.
	\end{claim}
	
	\begin{proof}
The edge set $E_0$ contributes at most $2|S|$ to the degree of $e_N$. For each $j\geq 1$, an edge arising at level $j$ can be incident to $e_N$ only through one of the two coordinates $(j,e_N)$ or $(j,a_j^{-1})$. Therefore, we get
\begin{align*}
\int_{\mathrm{Graph}(N)} \deg_{\mathcal{G}}(e_N)\ d\mu(\mathcal{G}) &= \int_\Omega \deg_{\Psi(\omega)}(e_N)\ d\P(\omega)\\
&\leq 2|S|+\sum_{j\geq 1}\left(\P(\omega(j,e_N)=1)+ \P(\omega(j,a_j^{-1})=1)\right)\\
&=2|S|+2\sum_{j\geq 1}2^{-j}=2(|S|+1).
	\end{align*}
		This proves the claim.
	\end{proof}

 The group $N$ acts on $\Omega$ by $(g\cdot\omega)(j,x):=\omega(j,g^{-1}x)$ for each $g\in N$, $j\ge 1$ and $x\in N$. Since, for each fixed $j\ge 1$, the coordinates
$\{\omega(j,x):x\in N\}$ are i.i.d., the measure $\P$ is $N$-invariant. Moreover, the map $\Psi$ is $N$-equivariant. Indeed,
the edge $\{x,xa_j\}$ is present in $\Psi(\omega)$ if and only if $\{gx,gxa_j\}$ is present in $\Psi(g\omega)$. It follows that $\mu=\Psi_*\P$ is $N$-invariant.   
    
    We will now show that for $\mathbb{P}$-almost every $\omega\in \Omega$, the graph $\Psi(\omega)$ is connected. In order to do this, it suffices to prove the following claim.

\begin{claim}
For every $j\geq0 $, for $\mathbb P$-almost every $\omega\in\Omega$ and every $g\in N$, the induced subgraph $\Psi(\omega)[gN_j]$ on the coset $gN_j$ is connected.
\end{claim}
\begin{proof}[Proof of claim]

For $j=0$, the conclusion holds for every $\omega\in\Omega$. Indeed,
for every $g\in N$, the induced subgraph $\Psi(\omega)[gN_0]$ contains the left translate by $g$ of the Cayley graph of $N_0$ with
respect to $S$. Since $S$ generates $N_0$, this graph is connected.

Let $j\geq 1$ and assume that the claim holds for $j-1$. Since $N_j=\langle N_{j-1},a_j\rangle$, and since by the induction hypothesis every left coset of $N_{j-1}$ is connected, it is enough to show that for $\mathbb P$-almost every $\omega\in\Omega$ and every $g\in N$, the vertices $g$ and $ga_j$ are connected in $\Psi(\omega)[gN_j]$.

Notice first that $a_jb\neq e_G$ for every $b\in N_{j-1}$, since $a_j\notin N_{j-1}$. We first claim that there exists an infinite sequence $(b_i)_{i\geq 0}$ in $N_{j-1}$ such that the sets
\[C_G(a_jb_i)\backslash\{e_G\}, \text{ for } i\geq 0,\]
are pairwise disjoint. Let us suppose, looking for a contradiction, that no such infinite sequence exists. We may then choose a maximal finite family $b_0,\ldots,b_k\in N_{j-1}$ such that
\[ C_G(a_jb_i)\backslash\{e_G\},
\text{ for } 0\leq i\leq k,
\]
are pairwise disjoint. By maximality, for every $b\in N_{j-1}$ there
exists $0\leq i\leq k$ such that
\[
C_G(a_jb)\cap C_G(a_jb_i)\neq\{e_G\}.
\]
Since we are assuming that centralizers are abelian, Lemma \ref{lem: centralizers} lets us conclude that $C_G(a_jb)=C_G(a_jb_i)$. In particular, $a_jb\in C_G(a_jb_i)$,
and hence $b\in a_j^{-1}C_G(a_jb_i)$. It follows that
\[N_{j-1}\subseteq\bigcup_{i=0}^k a_j^{-1}C_G(a_jb_i).\]
The right-hand side is finite, since each centralizer is finite, whereas $N_{j-1}$ is infinite. This is a contradiction, and hence we conclude the existence of an infinite sequence $(b_i)_{i\geq 0}$ in $N_{j-1}$ such that the sets $(C_G(a_jb_i)\backslash\{e_G\})_{i\ge 0}$ are pairwise disjoint.

For each $i\geq 0$, put $c_i\coloneqq a_jb_i$ and let $d_i$ denote the order of $c_i$. Since $c_i\neq e_G$ and $c_i\in C_G(c_i)$, we have $2\leq d_i\leq n$. Moreover,
\[
\left\{c_i,c_i^2,\ldots,c_i^{d_i-1}\right\} \subseteq C_G(c_i)\backslash\{e_G\}.
\]
Hence, by our choice of the elements $b_i$, the sets
\[
\left\{c_i,c_i^2,\ldots,c_i^{d_i-1}\right\}, \text{ for }i\geq 0,
\]
are pairwise disjoint.

Fix $g\in N$. For each $i\geq 0$, define
\[
A_{i,g}\coloneqq \left\{\omega\in\Omega: \omega(j,gc_i^\ell)=1 \text{ for every }1\leq \ell\leq d_i-1 \right\}.
\]
The events $A_{i,g}$, $i\geq 0$, depend on pairwise disjoint collections
of coordinates of $\Omega$, and are therefore independent. Moreover,
\[\mathbb P(A_{i,g})=(2^{-j})^{d_i-1}\geq(2^{-j})^{n-1}.\]
Consequently, $\sum_{i\geq 0}\mathbb P(A_{i,g})=\infty$. By the Borel-Cantelli lemma, for $\mathbb{P}$-almost every $\omega\in \Omega$, the events $(A_{i,g})_{i\ge 0}$ occur infinitely often. In particular, for $\Omega_{j,g}\coloneqq \bigcup_{i\geq 0}A_{i,g}$ we have $\mathbb{P}(\Omega_{j,g})=1$, and since $N$ is countable, we obtain that $\Omega_j\coloneqq\bigcap_{g\in N}\Omega_{j,g}$ has full $\mathbb{P}$-measure.

Consider the subset of $\Omega_{j-1}\subseteq\Omega$ of full $\mathbb{P}$-measure where $\Psi(\omega)[gN_{j-1}
]$ is connected for every $g\in N$. Fix $\omega\in \Omega_{j}\cap \Omega_{j-1}$ and $g\in N$. Then $\omega \in A_{i,g}$ for some value $i\ge 0$. We claim that $g$ and $ga_j$ are connected in $\Psi(\omega)[gN_j]$. Indeed, since $b_i\in N_{j-1}$, the vertices $ga_j$ and $gc_i=ga_jb_i$ belong to the same left coset of $N_{j-1}$, and hence are connected inside this coset by the induction hypothesis.

For every $1\leq \ell\leq d_i-1$, the event $A_{i,g}$ provides the edge $\{gc_i^\ell,gc_i^\ell a_j\}$ in the graph $\Psi(\omega)[gN_j]$. Furthermore, whenever $1\leq \ell\leq d_i-2$, we have $gc_i^{\ell+1}=gc_i^\ell a_jb_i$, so the vertices $gc_i^\ell a_j$ and $gc_i^{\ell+1}$ lie in the same left coset of $N_{j-1}$ and are therefore connected by the induction hypothesis. Finally, since $c_i^{d_i}=e_G$, we have $c_i^{d_i-1}a_jb_i=e_G$,
and hence $c_i^{d_i-1}a_j=b_i^{-1}$. Therefore, $gc_i^{d_i-1}a_j=gb_i^{-1}$ belongs to the same left coset $gN_{j-1}$ as $g$.

Putting these paths together gives a path of the form
\[
ga_j \leadsto gc_i \rightarrow gc_i a_j \leadsto gc_i^2 \rightarrow gc_i^2a_j \leadsto \cdots \leadsto gc_i^{d_i-1} \rightarrow gc_i^{d_i-1}a_j=gb_i^{-1}\leadsto g,
\]
where each symbol $\leadsto$ denotes a path contained in a left coset
of $N_{j-1}$, whose existence is guaranteed by the induction hypothesis, and each symbol $\rightarrow$ denotes one of the random edges supplied by $A_{i,g}$. All the vertices appearing in this path belong to $gN_j$.

We have therefore shown that, for every $\omega\in \Omega_j\cap \Omega_{j-1}$ and every $g\in N$, the vertices
$g$ and $ga_j$ are connected in $\Psi(\omega)[gN_j]$. Since $N_j=\langle N_{j-1},a_j\rangle$, and since every left coset of $N_{j-1}$ is connected by the induction hypothesis, it follows that $\Psi(\omega)[gN_j]$ is connected for every $g\in N$ and $\omega\in \Omega_j\cap \Omega_{j-1}$. The fact that $\mathbb{P}(\Omega_j\cap \Omega_{j-1})=1$ completes the induction and proves the claim.
\end{proof}
Since there are only countably many $j\geq0$, after intersecting the
corresponding events of probability one we may assume that, simultaneously for every $j\geq0$ and every $g\in N$, the induced subgraph $\Psi(\omega)[gN_j]$ is connected. Since $N=\bigcup_{j\geq0}N_j$, it follows that every $g\in N$ belongs to some $N_j$ and is therefore connected to $e_N$ in $\Psi(\omega)$. Hence $\Psi(\omega)$ is connected for $\mathbb P$-almost every $\omega\in\Omega$. Thus, $\mu=\Psi_*\mathbb P\in\mathcal M(N,\mathcal S(N))$. Together with Claim \ref{claim: average degree}, this gives
\[
\frac{1}{2}\int_{\mathcal{S}(N)} \deg_{\mathcal{G}}(e_N)\ d\mu(\mathcal{G}) \leq |S|+1<\infty,
\]
and therefore $\cost(N)<\infty$. This proves the proposition.
\end{proof}

\subsection{The proof of Theorem \ref{thm: free burnside main theorem}}\hfill\\

Let $G=B(m,n)$, where $n$ is an odd integer sufficiently large so that Lemmas \ref{lem: cyclic centralizers} and \ref{lem:short_exact_sequence} apply. Recall that by Lemma \ref{lem: cyclic centralizers}, Theorem \ref{thm:atabekyan} and \cite{Adian1979} it suffices to take $n\ge 1003$ if $m\ge 2$, and $n\ge 665$ if $m\ge 3$.  By Lemma \ref{lem:short_exact_sequence}, there exists an infinite normal subgroup $N\lhd G$ of infinite index which contains an infinite finitely generated subgroup.
Moreover, by Lemma \ref{lem: cyclic centralizers}, the centralizer in $G$ of every non-trivial element is cyclic of order $n$. Hence
Proposition \ref{prop: main prop} applies and gives $\cost(N)<\infty$. Since $N$ is infinite, normal, and of infinite index in $G$, Gaboriau's Theorem \ref{thm:gaboriau} now yields $\cost(G)=1.$ This proves the theorem.
\qed

\bibliographystyle{alpha}
\bibliography{biblio}
\end{document}